\newtheorem{Lemma}[equation]{Lemma}
\newtheorem{Corollary}[equation]{Corollary}
\newtheorem{Proposition}[equation]{Proposition}
\theoremstyle{remark}
\theoremstyle{remark}
\theoremstyle{definition}
\numberwithin{equation}{section}
\newcommand{\R}{\mathrm{\bf R}}           % Use for real numbers.
\newcommand{\C}{\mathrm{\bf C}}           % Use for complex numbers.
\newcommand{\Ad}{\mathrm{Ad}}             % Use for Adjoint action
\newcommand{\rank}{\mathrm{rank}}
\newcommand{\sym}{\mathrm{sym}}
\newcommand{\supp}{\mathrm{supp}}
\newcommand{\ann}{\mathrm{ann}}
\newcommand{\Irr}{\mathrm{Irr}}
\newcommand{\fa}{{\mathfrak a}}             % more Fraktur
\newcommand{\fb}{{\mathfrak b}}
\newcommand{\fB}{{\mathfrak B}}
\newcommand{\fg}{{\mathfrak g}}
\newcommand{\fh}{{\mathfrak h}}
\newcommand{\fk}{{\mathfrak k}}
\newcommand{\fl}{{\mathfrak l}}
\newcommand{\fn}{{\mathfrak n}}
\newcommand{\fp}{{\mathfrak p}}
\newcommand{\fq}{{\mathfrak q}}
\newcommand{\fu}{{\mathfrak u}}
\newcommand{\f}{\mathfrak}
\newcommand{\ga}{\alpha}
\newcommand{\gb}{\beta}
\newcommand{\gD}{\Delta}
\renewcommand{\ge}{\epsilon}
\newcommand{\gl}{\lambda}
\newcommand{\gL}{\Lambda}
\newcommand{\gs}{\sigma}
\newcommand{\gt}{\theta}
\newcommand{\Cal}{\mathcal}
\newcommand{\Eul}{\EuScript}
\newcommand{\fQ}{{\Eul Q}}
\newcommand{\fC}{{\Eul C}}
\newcommand{\fD}{{\Eul D}}
\newcommand{\fR}{{\mathcal R}}
\renewcommand{\fB}{{\mathfrak B}}
\newcommand{\fF}{{\mathfrak F}}
\newcommand{\IP}[2]{\langle#1 , #2\rangle}     %inner product.
\newcommand{\Htop}{H_{\mathrm{top}}(\fB^f)}
\renewcommand{\bar}[1]{\overline{#1}}
\newcommand{\cb}{T_\fQ^*\fB}
\begin{document}

\title{Examples of leading term cycles of Harish-Chandra modules}

%%%%%%%%%%%%%%%%%%%%%%%%%%%%%%%%%%%%%%%%%%%%%%%%%%%%
%%%%%%%%%%%%%%%%%%%%%%%%%%%%%%%%%%%%%%%%%%%%%%%%%%%%
%%%%%%%%%%%%%%%%%%%%%%%%%%%%%%%%%%%%%%%%%%%%%%%%%%%%

\author{L.~Barchini}
\address{Oklahoma State University\\
Mathematics Department\\
   Stillwater, OK 74078}
\email{ leticia@math.okstate.edu}

\author{R.~Zierau}
\address{Oklahoma State University\\
Mathematics Department\\
   Stillwater, OK 74078}
\email{ zierau@math.okstate.edu}

\thanks{Research supported by an NSA grant}

\subjclass[2010]{22E46}   

\keywords{Harish-Chandra module, symplectic group, characteristic cycle}

%\thanks{Feb. 25, 2012}
\begin{abstract}
Many examples are given of irreducible Harish-Chandra modules for type $C_n$ having reducible leading term cycles.  Examples are also given of irreducibles in category $\mathcal O$ having reducible associated varieties in type $C_n$.
\end{abstract}

%\thanks{\today}
%%%%%%%%%%%%%%%%%%%%%%%%%%%%%%%%%%%%%%%%%%%%%%%%%%%%%%%%%%%%%%%%%%%
\maketitle
%%%%%%%%%%%%%%%%%%%%%%%%%%%%%%%%%%%%%%%%%%%%%%%%%%%%%%%%
%{\small\tableofcontents}

\parskip=4pt
\baselineskip= 16pt

\section*{Introduction} 
The characteristic cycle is an important invariant of a Harish-Chandra module $X$. It is defined in terms of the localization of $X$.  The leading term cycle is the portion of the characteristic cycle that contributes to the associated variety.  In principle the leading term cycle is easier to calculate than the characteristic cycle, but there is know know method to compute either in any generality.  Until recently it was not known whether or not the leading term cycle of an irreducible Harish-Chandra module of $SU(p,q)$ is necessarily irreducible; an example given in \cite{Williamson14} for category $\Cal O$ implies that there are in fact examples of reducible leading term cycles for $SU(p,q)$.  It was suspected that leading term cycles of irreducible Harish-Chandra modules for $Sp(p,q)$ are always irreducible.  In this article we show there are many Harish-Chandra modules of indefinite symplectic groups having reducible leading term cycles.  An example of an irreducible Harish-Chandra module for $Sp(4,4)$ having reducible leading term cycle is provided in \cite{Barchini14}.  Here we give a systematic way to find many examples.  

The argument is based on a method given in \cite[\S4]{Trapa07} to transfer information about leading term cycles from one real form to another.  Briefly, we construct examples of reducible leading term cycles for $Sp(2n,\R)$, then make conclusions about reducible leading term cycles for indefinite symplectic groups.  A byproduct is that many irreducibles in category $\Cal O$ in type $C$ are located that have reducible associated varieties.  The Harish-Chandra modules we work with for $Sp(2n,\R)$ have highest weights.  This puts us in the intersection of a category of $(\fg,K)$-modules and a category of $(\fg,B)$-modules; the appendix gives a geometric lemma special to this situation.

%%%%%%%%%%%%%%%%%%%%%%%%%%%%%%%%%%%%%%%%%%%%%%%%%%%%%%%%%%%%%%%

\section{Preliminary statements}\label{sec:ps} 

%%%%%%%%%%%%%%%%%%%%%%%%%%%%%%%%%%%%%%%%

\subsection{} \label{subsec:hwt}  Highest weight Harish-Chandra modules occur for a connected simple Lie group $G_0$ exactly when $G_0$ is of hermitian type (i.e., when the center of a maximal compact subgroup $K_0$ is one-dimensional).  Write the complexified Cartan decomposition as $\fg=\fk+\fp$ and fix a Cartan subalgebra $\fh$ of $\fg$ contained in $\fk$.  Let $K$ denote the complexification of $K_0$.  As $K$-representation, $\fp$ decomposes into a direct sum of two irreducible subrepresentations.  It is convenient to choose one of these subrepresentations and call it $\fp_+$ (and call the other $\fp_-$).  Let $\gD(\fp_+)$ be the set of $\fh$-weights in $\fp_+$.  We fix a positive system of roots $\gD_c^+\subset \gD^+(\fh,\fk)$ and set
\begin{equation}\label{eqn:pos}
 \gD^+:=\gD_c^+\cup \gD(\fp_+),
\end{equation}
a positive system of roots in $\gD(\fh,\fg)$.

Let $\fb$ be the Borel subalgebra of $\fg$ determined by $\gD^+$; $\fb=\fh+\fn,$ with $\fn=\sum_{\ga\in\gD^+} \fg^{(\fa)}$.  Then each highest weight Harish-Chandra module has a highest weight with respect to either $\gD^+$ or $\gD_c^+\cup\gD(\fp_-)$.

The highest weight modules with respect to $\gD^+$ are of the form $L(\gl)$, the irreducible quotient of $M(\gl):=\Cal U(\fg) \otimes_{\Cal U(\fb)}\C_\gl$, where $\gl\in\fh^*$.  Those of infinitesimal character $\rho$ may be written as 
$$
L_w:=L(-w\rho-\rho), w\in W(\fh,\fg).
$$

The highest weight module $L(\gl)$ is a Harish-Chandra module if and only if $\gl$ is $\gD_c^+$-dominant and analytically integral.  Therefore, the highest weight Harish-Chandra modules of infinitesimal character $\rho$  are precisely the the $L_w$ for which $-w\rho$ is $\gD_c^+$-dominant.  The number of these is $\#(W/W(\fh,\fk))$.

We let $\Cal N$ denote the nilpotent cone in $\fg$ and set $\Cal{N}_\gt:=\Cal{N}\cap\fp$.  The definition of associated variety immediately implies that each irreducible highest weight Harish-Chandra module has associated variety contained in $\fp_+\subset\Cal N_\gt$.  See, for example, \cite{NishiyamaOchiaiTaniguchi01}.  There are $r+1$ $K$-orbits in $\fp_+$, where $r=\text{rank}_\R(G_0)$.  We may label them so that 
$$
  \{0\}=\bar{\Cal{O}_0}\subset \bar{\Cal{O}_1}\subset \cdots \bar{\Cal{O}_r} =\fp_+.
$$
It follows that the associated variety of any highest weight Harish-Chandra module is the closure of a single orbit $\Cal{O}_k$.

%%%%%%%%%%%%%%%%%%%%%%%%%%%%%%%%%%%%%%%%

\subsection{}\label{subsec:CC}  The group $K$ acts on $\fB$ with a finite number of orbits; we write $K\backslash \fB$ for the set of $K$-orbits on $\fB$.  The conormal variety for this action of $K$ is
$$
  T_K^*\fB:=\bigcup_{\fQ\in K\backslash\fB} \bar{T_\fQ^*\fB}.
$$

Suppose that $X$ is a Harish-Chandra module with corresponding $\fD$-module $\Cal{M}_X$.  Write the characteristic cycle of $\Cal{M}_X$ as 
$$
  CC(X)=\sum n_\fQ(X) [\bar{T_\fQ^*\fB}],
$$
where the sum is over various $K$-orbits in $\fB$.  See \cite[Section 1]{BorhoBrylinski85}.

The moment map $\mu:T^*\fB\to\Cal{N}$ relates the characteristic cycle to the  associated variety as follows.  One easily sees that $\mu(\bar{T_\fQ^*\fB})\subset \Cal{N}_\gt$ and $\mu^{-1}(\Cal N_\gt)=T_K^*\fB$.  By \cite{BorhoBrylinski85}
\begin{equation}\label{eqn:AV}
  AV(X)=\bigcup_{\fQ:n_\fQ(X)\neq 0}\mu(\bar{T_\fQ^*\fB}).
\end{equation}

Each $\mu(\bar{T_\fQ^*\fB})$ is the closure of some $K$-orbit $\Cal O$ in $\Cal N_\gt$.  The `Springer fiber' $\mu^{-1}(f)$ is an equidimensional subvariety of $T_K^*\fB$.    Its irreducible components are partitioned by the irreducible components of $T_K^*\fB$, i.e., by the conormal bundle closures $\bar{T_\fQ^*\fB}$.  So each component of $\mu^{-1}(f)$ lies inside a unique $\bar{T_\fQ^*\fB}$.   It is a fact that the set of irreducible components of $\mu^{-1}(f)$ that lie in a given conormal bundle is either empty or exactly one orbit under the component group
$A_K(f):=Z_K(f)/Z_K(f)_e.$
Therefore, for a given $f\in\Cal N_\gt$,
\begin{equation}\label{eqn:AK-orbit}
  \#\{\fQ: \mu(\bar{T_\fQ^*\fB})=\bar{K\cdot f}\}=
  \#\{A_K(f)\text{-orbits in }\mu^{-1}(f)\}.
\end{equation}  

The notion of Harish-Chandra cell is defined in \cite{BarbaschVogan83}.  A cell spans a representation $V_\fC$ of $W$, which is defined in terms of coherent continuation.  It follows easily from the definition that if a cell $\fC$ contains one highest weight Harish-Chandra module, then all representations in $\fC$ are highest weight Harish-Chandra modules.  It also follows easily from the definitions that any two representations in a Harish-Chandra cell have the same  associated variety.   

Suppose $\fC$ is a cell and $\Cal O\subset \Cal N_\gt$ is a $K$-orbit for which $\bar{\Cal O}$ occurs in the associated variety for $\fC$.  Write $\Cal O^\C$ for the $G$-saturation of $\Cal O$.  Then the representation $\pi(\Cal O^C)$ associated to $\Cal O^\C$  by the Springer correspondence occurs in $V_\fC$.  Through the projection $T_\fQ^*\fB \to \fB$, the Springer fiber $\mu^{-1}(f)$ is identified with $\fB^f:=\{\fb\in\fB:f\in \fb\}$ and we have (\cite{Springer78})
$$\pi(\Cal O^\C):=  \left( H^{\text{top}}(\fB^f)\right)^{A_G(f)},
$$
with
$$
  A_G(f):=Z_G(f)/Z_G(f)_e.
$$
Thus, the dimension of $\pi(\Cal O^\C)$ is the number of $A_G(f)$-orbits in $\Irr(\fB^f):=\{\text{irreducibe components in }\fB^f\}$ (or in $\mu^{-1}(f)$).  It follows that if $A_G(f)$ and $A_K(f)$ have the same orbits in $\Irr(\fB^f)$, then for $\Cal O=K\cdot f$
$$\#\{\fQ: \mu(\bar{T_\fQ^*\fB})=\bar{\Cal O}\}=\dim(\pi(\Cal O^\C)).
$$ 

We now make an observation that will be important in Section \ref{sec:sp}.  This counting argument has been used, for example, in \cite{Trapa07}.  Suppose that $\fC$ is a Harish-Chandra cell of highest weight Harish-Chandra modules with associated variety equal to $\bar{\Cal O}=\bar{K\cdot f}$.

\begin{Lemma} \label{lem:redCC}
If the $A_G(f)$ and $A_K(f)$ orbits in $\Irr(\fB^f)$ coincide and  $\#(\fC)\gneqq \dim(\pi(\Cal O^\C))$, then there are representations in $\fC$ with reducible characteristic cycle.
\end{Lemma}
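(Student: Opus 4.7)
The plan is to argue by contradiction via a counting argument: assume every representation in $\fC$ has irreducible characteristic cycle, and derive $\#(\fC)\leq\dim\pi(\Cal O^\C)$, which contradicts the strict inequality in the hypothesis.

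The first step is to enumerate the conormal bundle closures whose moment map image is $\bar{\Cal O}$. By \eqref{eqn:AK-orbit} this number equals $\#\{A_K(f)\text{-orbits on }\Irr(\fB^f)\}$, and the coincidence hypothesis identifies it with $\#\{A_G(f)\text{-orbits on }\Irr(\fB^f)\}=\dim\pi(\Cal O^\C)$. Next, for each $L\in\fC$ let $\fQ_0(L)$ denote the $K$-orbit on which the $\fD$-module $\Cal M_L$ is supported. The term $[\bar{T_{\fQ_0(L)}^*\fB}]$ always occurs in $CC(L)$ with positive coefficient (the rank of the corresponding equivariant local system), and its moment map image is $AV(L)=\bar{\Cal O}$. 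Under the irreducibility assumption, $CC(L)=n_L\,[\bar{T_{\fQ_0(L)}^*\fB}]$ with $n_L\geq 1$, so the assignment $L\mapsto\fQ_0(L)$ sends $\fC$ into the finite set just counted.

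The crucial remaining step, which I expect to be the main obstacle, is injectivity of $L\mapsto\fQ_0(L)$: distinct elements of $\fC$ must be supported on distinct $K$-orbits. Two irreducible Harish-Chandra modules with the same $K$-support differ only by an irreducible $K$-equivariant local system, so the claim reduces to showing that for a highest weight Harish-Chandra module the local system is canonically determined. The natural justification uses the additional $(\fg,\fb)$-equivariance enjoyed by the $L_w$'s on top of the $(\fg,K)$-equivariance --- precisely the intersection-of-categories regime handled by the geometric lemma advertised for the appendix --- to prevent two distinct highest weight modules from living on a common orbit. With injectivity established, the count yields $\#(\fC)\leq\dim\pi(\Cal O^\C)$, contradicting the hypothesis, so at least one $L\in\fC$ must have a reducible characteristic cycle.
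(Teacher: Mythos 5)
Your argument is correct and is essentially the paper's: both are pigeonhole arguments resting on Cor.~\ref{cor:supp} (distinct supports for highest weight Harish-Chandra modules) together with the count in \eqref{eqn:AK-orbit}/\eqref{eqn:geo}; the paper states it directly while you run it as a contradiction. One minor caution in your write-up: the assertion that the moment-map image of $\overline{T_{\fQ_0(L)}^*\fB}$ equals $AV(L)=\bar{\Cal O}$ is \emph{not} unconditional (that failure is precisely what the lemma is about); it only follows from \eqref{eqn:AV} after you have assumed $CC(L)$ is irreducible, so that sentence should come after, not before, the irreducibility hypothesis.
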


\begin{proof} No two representations in a cell of highest weight Harish-Chandra modules have the same support. This is Cor.~\ref{cor:supp} of the Appendix. Thus for some representation in $\fC$, the conormal bundle of the support has moment map image of strictly smaller dimension than $\Cal O$.  In this case the conormal bundle of support occurs in the characteristic cycle along with \emph{another} conormal bundle closure which has moment image exactly $\bar{\Cal O}$.
\end{proof}

In terms of the \emph{leading term cycle}
$$
LTC(X):=\sum_{\mu(\bar{\cb})=\bar{\Cal O}} n_\fQ(X) [\bar{T_\fQ^*\fB}],
$$
the lemma tells us that if  the size of the cell is strictly greater than $\dim(\pi(\Cal O^\C))$, then there are Harish-Chandra modules $X_\fQ\in\fC$ ($\supp(X_\fQ)=\bar{\fQ}$) for which $\bar{\cb}$ does not occur in $LTC(X_\fQ)$.

%%%%%%%%%%%%%%%%%%%%%%%%%%%%%%%%%%%%%%%%

\subsection{Cohomological induction}\label{subsec:coho_ind} Let $\fq=\fl+\fu$ be a $\gt$-stable parabolic subalgebra of $\fg$ and write $\bar{\fq}=\fl+\bar{\fu}$ for the opposite parabolic. We consider cohomologically induced modules $\fR_{\bar{\fq}}(Z)$ where $Z$ is in the good range.  The good range means that if $\gL_Z$ is the infinitesimal character of $Z$, then $\IP{\gL_Z+\rho(\bar{\fu})}{\gb}>0, \gb\in\gD(\bar{\fu})$.  This condition guarantees, for example, that $\fR_{\bar{\fq}}(Z)$ is irreducible and nonzero.
(See \cite[Th.~8.2]{KnappVogan95}.)

It is well-known that the characteristic cycle of $\fR_{\bar{\fq}}(Z)$ may be given in terms of the characteristic cycle of $Z$.  For the statement of this we need to associate a $K$-orbit in $\fB$ to each $K\cap L$-orbit in $\fB_L$, the flag variety for $\fl$.  This is done as follows.  Let $\fF$ be the generalized flag variety of all parabolic subalgebras of $\fg$ conjugate to $\fq$.  Let $\pi:\fB\to\fF$ be the natural projection.  Then 
\begin{equation*}
\pi^{-1}(\fq)=\left\{\fb\in\fB: \fb\subset\fq\right\}\simeq\fB_L.
\end{equation*}
The bijection is given by $\fb_L\mapsto \fb^L:=\fb_L+\fu$, for any $\fb_L\in\fB_L$.  For $\fQ_L\in K\cap L\backslash\fB_L$ write $\fQ_L=(K\cap L)\cdot\fb_L$.  Now set $\fQ:=K\cdot\fb^L$.  Thus we have a map $\fQ_L\mapsto\fQ$ from $K\cap L$-orbits in $\fB_L$ to $K$-orbits in $\fB$.  Now we may state the formula for the characteristic cycle of $\fR_{\bar{\fq}}(Z)$.
\begin{Proposition}\label{prop:CCRQ}
If $CC(Z)=\displaystyle\sum_i n_i[\bar{T_{\fQ_{L,i}}^*\fB_L}]$, then 
\begin{equation*}
 CC(\fR_{\bar{\fq}}(Z))=\sum_in_i[\bar{T_{\fQ_{i}}^*\fB_L}],
 \end{equation*}
where $\fQ_{L,i}\mapsto \fQ_i$ as described above.
\end{Proposition}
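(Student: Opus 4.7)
The strategy is to realize $\fR_{\bar{\fq}}(Z)$ geometrically as a direct image $\fD$-module and then invoke the known functoriality of the characteristic cycle under that direct image construction. This is the mechanism used in \cite{HechtMilicicSchmidWolf87} (and in subsequent treatments such as those cited in \cite{Trapa07}); the content of the proposition is essentially a bookkeeping translation of that geometric statement into the language of $K$-orbits in $\fB$ and $(K\cap L)$-orbits in $\fB_L$.

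First I would set up the geometry. The subvariety $\pi^{-1}(\fq)=\{\fb\in\fB:\fb\subset\fq\}$ is a closed smooth subvariety of $\fB$, preserved by $K\cap L$, and the map $\fb_L\mapsto \fb_L+\fu$ identifies it with $\fB_L$. The full $K$-orbit of this fiber is described by the fibration $K\times_{K\cap Q}\pi^{-1}(\fq)\to\fB$ (where $Q$ is the normalizer of $\fq$ in $G$), and under the orbit correspondence $\fQ_L\mapsto\fQ$ of the statement, one has the pointwise identification $\fQ=K\cdot\fQ_L$ (with $\fQ_L$ sitting inside $\fB$ via the inclusion above).  The fundamental geometric fact needed is that under this identification the conormal bundles match up: with appropriate normalization,
\begin{equation*}
\overline{T^*_{\fQ}\fB}\;=\;\overline{K\cdot T^*_{\fQ_L}\fB_L},
\end{equation*}
the $K$-saturation being an isomorphism over a generic point of $\fQ$. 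In particular the multiplicity of $\overline{T^*_{\fQ}\fB}$ in a $K$-saturated cycle equals the multiplicity of $\overline{T^*_{\fQ_L}\fB_L}$ in the original cycle, and distinct $\fQ_{L,i}$ produce distinct $\fQ_i$.

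Second I would invoke the geometric realization of cohomological induction: in the good range, the Beilinson--Bernstein localization of $\fR_{\bar{\fq}}(Z)$ is obtained from the localization $\Cal M_Z$ of $Z$ on $\fB_L$ by first extending to the $K$-equivariant family $K\times_{K\cap Q}\pi^{-1}(\fq)$ and then taking the direct image under the proper map to $\fB$. Applying Kashiwara's functoriality of characteristic cycles under proper direct images of $\fD$-modules (for this closed/fiber-bundle-type map the pushforward simply sends $[\overline{T^*_{W}\fB_L}]$ to $[\overline{T^*_{K\cdot W}\fB}]$ with the same multiplicity), we obtain
\begin{equation*}
CC\bigl(\fR_{\bar{\fq}}(Z)\bigr)=\sum_i n_i\,[\overline{T^*_{\fQ_i}\fB}],
\end{equation*}
which is the desired formula.

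The main obstacle is the second step: one must be sure that the direct image step preserves multiplicities exactly, with no extra components appearing and no multiplicity absorbed into lower-dimensional pieces. This is precisely where the good-range hypothesis is used, since it guarantees that the underlying cohomology in the HMSW construction is concentrated in a single degree and that $\fR_{\bar{\fq}}(Z)$ is irreducible and nonzero, so the multiplicities on $\fB_L$ transfer cleanly to $\fB$ without correction. Once this is in place, the combinatorics of the orbit map $\fQ_L\mapsto\fQ$ yields the stated formula immediately.
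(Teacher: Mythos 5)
The paper gives no proof of this proposition; it is stated as a well-known consequence of the geometric realization of cohomological induction, immediately after the orbit map $\fQ_L\mapsto\fQ$ is set up. Your proposal supplies exactly the argument that would be invoked, and it is essentially sound. One point worth sharpening: you locate the reason multiplicities transfer exactly in the good-range hypothesis, but in fact the relevant map $K\times_{K\cap Q}\pi^{-1}(\fq)\to\fB$ is an isomorphism onto $\pi^{-1}(K\cdot\fq)$, the preimage of the \emph{closed} $K$-orbit $K\cdot\fq$ in $\fF$, hence a closed immersion. For a closed immersion, Kashiwara's equivalence gives exact, term-by-term preservation of the characteristic cycle with no possibility of extra components appearing or multiplicities cancelling; no estimate-plus-cancellation argument of the kind you are worrying about is needed. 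The good-range hypothesis is still genuinely used, but at an earlier step: it guarantees $\fR_{\bar{\fq}}(Z)$ is concentrated in a single cohomological degree and nonzero, so that its localization on $\fB$ coincides with the direct image of the localization of $Z$, rather than being assembled from several derived pieces. Also, the reference you cite for the HMSW construction is not among the bibliography entries of this paper, though it is the standard source for the geometric picture you describe.
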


By observing that $$\mu(\bar{\cb})=\bar{K\cdot(\fn^L\cap\fp)}=\bar{K\cdot(\fn^L\cap\fp+\fu\cap\fp)}=\bar{K\cdot(\mu(\bar{T_{\fQ_L}^*\fB_L})+\fu\cap\fp)},$$ we have the following (well-known) corollary.
\begin{Corollary}\label{cor:AVRQ}
$\displaystyle AV(\fR_{\bar{\fq}}(Z))=\bar{K\cdot(AV(Z)+\fu\cap\fp)}$.
\end{Corollary}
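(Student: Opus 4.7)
The plan is to combine three ingredients already set up in the excerpt: the formula (\ref{eqn:AV}) expressing the associated variety as the union of moment-map images of conormal bundle closures, Proposition \ref{prop:CCRQ} which writes $CC(\fR_{\bar\fq}(Z))$ as the image of $CC(Z)$ under the correspondence $\fQ_L\mapsto\fQ$, and the displayed identity
$$
\mu(\bar{\cb})=\bar{K\cdot(\mu(\bar{T_{\fQ_L}^*\fB_L})+\fu\cap\fp)}
$$
stated just before the corollary. The corollary is essentially a formal consequence of these.

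First I would apply (\ref{eqn:AV}) to $\fR_{\bar\fq}(Z)$ and use Proposition \ref{prop:CCRQ} to index the nonzero terms in $CC(\fR_{\bar\fq}(Z))$ by those $\fQ_{L,i}$ with $n_i\neq 0$, obtaining
$$
AV(\fR_{\bar\fq}(Z))=\bigcup_{i:n_i\neq 0}\mu(\bar{T_{\fQ_i}^*\fB}).
$$
Next I would substitute the displayed identity into each term, giving
$$
AV(\fR_{\bar\fq}(Z))=\bigcup_{i:n_i\neq 0}\bar{K\cdot(\mu(\bar{T_{\fQ_{L,i}}^*\fB_L})+\fu\cap\fp)}.
$$
Finally, since the union is finite, the closure and the translation by $\fu\cap\fp$ followed by the $K$-action both commute with taking finite unions of irreducible closed pieces, and since (\ref{eqn:AV}) applied on the $L$-side reads $AV(Z)=\bigcup_i\mu(\bar{T_{\fQ_{L,i}}^*\fB_L})$, we conclude the claimed equality.

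The only non-bookkeeping step is the verification of the displayed identity itself. That is where I would spend care: at a point $\fb^L=\fb_L+\fu\in\fQ$ the conormal fiber of $T_\fQ^*\fB$ is, under the Killing form identification $(\fg/(\fk+\fb^L))^*\cong\fn^L\cap\fp$, the space
$$
\fn^L\cap\fp=(\fn_L+\fu)\cap\fp=(\fn_L\cap\fp)+(\fu\cap\fp),
$$
where in the last equality I use $\gt$-stability of $\fq$ to split $\fu=(\fu\cap\fk)+(\fu\cap\fp)$. Since $\mu$ is $K$-equivariant and $\fQ=K\cdot\fb^L$, closing under $K$ gives $\mu(\bar{\cb})=\bar{K\cdot(\fn_L\cap\fp+\fu\cap\fp)}$, and since $K\cap L\subset K$ one may replace $\fn_L\cap\fp$ by $(K\cap L)\cdot(\fn_L\cap\fp)=\mu_L(T_{\fQ_L}^*\fB_L)$ without changing the result. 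I expect the main (minor) obstacle to be the careful handling of closures versus $K$-saturation — one needs that $\bar{K\cdot S}=\bar{K\cdot\bar S}$ for $S\subset\fp$, which is standard because $K$ is an algebraic group acting on an affine variety.
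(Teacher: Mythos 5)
Your proof is correct and follows essentially the same route as the paper: apply \eqref{eqn:AV} together with Proposition~\ref{prop:CCRQ} and the displayed moment-map identity relating $\mu(\bar{\cb})$ to $\bar{K\cdot(\mu(\bar{T_{\fQ_L}^*\fB_L})+\fu\cap\fp)}$. The only difference is that the paper states that identity without justification, whereas you have also spelled out the conormal-fiber computation $\fn^L\cap\fp=(\fn_L\cap\fp)+(\fu\cap\fp)$ via $\gt$-stability of $\fq$; this is the intended, correct verification.
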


%%%%%%%%%%%%%%%%%%%%%%%%%%%%%%%%%%%%%%%%
%%%%%%%%%%%%%%%%%%%%%%%%%%%%%%%%%%%%%%%%

\section{$Sp(2n,\R)$}\label{sec:sp} We now consider $G_0=Sp(2n,\R)$.  Then $K$ is the complex group $GL(n)$.  Then, with the usual meaning of $\ge_j\in \fh^*$, the positive system 
$$  \gD^+(\fh,\fg)=\{\ge_i\pm\ge_j:i<j\}\cup\{2\ge_i\}
$$
is as in (\ref{eqn:pos}).

%%%%%

\subsection{}  The action of $K$ on $\fp_+\simeq \sym(n,\C)$ is by $k\cdot X=k^tXk$.  The orbits are
$$
 \Cal O_k=\{X\in \sym(n,\C) : \rank(X)=k\}, \text{ for }k=0,1,2,\dots, n.
$$
Base points may be chosen so that $f_k:=E_{11}+\dots +E_{kk}$ (where the matrix $E_{ii}$ has $1$ in the $(i,i)$ position and $0$'s elsewhere).  Each $f_k$ corresponds to the sum of  root vectors for $2\ge_1,\dots,2\ge_k$.  Thus
$$ \Cal O_k=K\cdot f_k=K\cdot\begin{pmatrix} I_k & 0 \\ 0 & 0 \end{pmatrix} \subset \sym(n,\C).
$$
Let $\Cal O_k^\C$ be the nilpotent $G$ orbit of $f_k$.

We collect some information on these orbits.  The partition associated to $\Cal O_k^\C$ has $k$ rows of length two and $2n-2k$ rows of length one.  The representation $\pi(\Cal O_k^\C)$ of $W$ associated to $\Cal O_k^\C$ by the Springer correspondence is an action of $W$ on  $(\Htop)^{A_G(f)}$ (\cite{Springer76}).  By (for example) \cite[\S10.1]{CollingwoodMcGovern93} we have 
\begin{equation}\label{eqn:dim-sp}  \dim(\pi(\Cal O_k^\C))={n\choose[\frac{k}{2}]}.
\end{equation}
As $\Htop$ is spanned by the irreducible components of $\fB^f$, we see that $\dim(\pi(\Cal O_k^\C))$ is the number of $A_G(f)$-orbits in $\Irr(\fB^f)$.  The orbits $\Cal O_k^\C$ are `special' exactly when $k$ is either even or equal to $n$.  It is known that for the pair $(G,K)=(Sp(2n),GL(n))$ the orbits of $A_G(f)$ and $A_K(f)$ in $\Irr(\fB^f)$ coincide (for any $f\in\Cal N_\gt$).  We may conclude from our earlier discussion that
\begin{equation}\label{eqn:geo} \#\{\fQ\in K\backslash \fB : \mu(\bar{\cb})=\bar{\Cal O_k}\}={n\choose [\frac{k}{2}]}.
\end{equation}
 A complex nilpotent orbit is called \emph{special} if it is the associated variety of a primitive ideal (of infinitesimal character $\rho$). Thus, the only associated varieties of Harish-Chandra modules are real forms of special orbits. 

Now consider only even $n$.   Since $\Cal O_{n-1}^\C$ is not special, there are no irreducible Harish-Chandra modules of infinitesimal character $\rho$ with associated variety $\bar{\Cal O_{n-1}}$.  Therefore the irreducible highest weight Harish-Chandra modules having supports $\fQ$ with
$$  \mu(\bar{\cb})=\bar{\Cal O_{n-1}} \text{ or }\bar{\Cal O_{n}}
$$
all have associated variety equal to $\bar{\Cal O_n}=\fp_+$.  In particular, there are ${n\choose \frac{n}{2}-1}$ irreducible highest weight Harish-Chandra modules (of infinitesimal character $\rho$) for which the closure of the conormal bundle of the support is \emph{not} in the leading term cycle.  Let $X_Q$ be such an irreducible highest weight Harish-Chandra module (with $\fQ$ its support).   We may conclude from Lemma \ref{lem:redCC} that there is a $\fQ_0$ so that 
\begin{equation}\label{eqn:CC-down}
  CC(X_\fQ)=[\bar{\cb}]+m_{\fQ_0}[\bar{T_{\fQ_0}^*\fB}]+\dots,
\end{equation}
with $m_{\fQ_0}\neq 0$ and $\mu(\bar{T_{\fQ_0}^*\fB})=\fp_+$.

%%%%%%%%%%%%%%%%%%%%%%%%%%%%%%%%%%%%%%%%

\subsection{}\label{subsec:q}  For any $m=0,1,2,\dots,n-1$, a $\gt$-stable parabolic subalgebra $\fq$ is defined by 
$$  H_n:=(n-m,\dots,2,1,0,\dots,0)=\sum_{j=1}^{n-m}(n-m-j+1)\ge_j\in\fh^*.
$$
The Levi subalgebra is $\fl\simeq \C^{n-m}\oplus \f{sp}(2m)$.  Note that $\gD(\fu)\subset \gD^+(\fh,\fg)$ and it follows that 
\begin{equation}\label{eqn:u}
  \fu\cap\fp=\fu\cap\fp_+.
\end{equation}

Consider an irreducible Harish-Chandra module $X$ for $L=Sp(2m,\R)$ as described in (\ref{eqn:CC-down}), that is, the conormal bundle of support does not contribute to the leading term cycle.  Since we are now working in the group $L$, we will use a subscript $L$ for the orbits in $\fB_L$ and set $Z_{\fQ_L}:=\C\otimes X$.  The support is some $\bar{\fQ}_L\subset \fB_L$ and $CC(Z_{\fQ_L})=[\bar{T_{\fQ_L}^*\fB_L}]+m_{\fQ_{L,0}}[\bar{T_{\fQ_{L,0}}^*\fB_L}]+\dots$.  Then the infinitesimal character of $Z_{\fQ_L}$ is $\rho(\fl)$, so $Z_{\fQ_L}$ is in the good range.  By \S\ref{subsec:q}
$$  CC(\fR_{\bar{\fq}}(Z_{\fQ_L}))=[\bar{T_{\fQ}^*\fB}]+m_0[\bar{T_{\fQ_0}^*\fB}]+\dots,
$$ 
for some $K$-orbits $\fQ,\fQ_0$ in $\fB$, and 
\begin{align*}
    AV(\fR_{\bar{\fq}}(Z_{\fQ_L}))&=\bar{K\cdot(AV(Z_{\fQ_L})+\fu\cap\fp)} \\
                     &=\bar{K\cdot(\fp_+\cap\fl+\fu\cap\fp)}\\
                     &=\fp_+, \text{ by (\ref{eqn:u})}.
\end{align*}

\begin{Lemma} Both $[\bar{T_{\fQ}^*\fB}]$ and $[\bar{T_{\fQ_0}^*\fB}]$ occur in $LTC(\Cal R_{\bar{\fq}}(Z_{\fQ_L}))$.
\end{Lemma}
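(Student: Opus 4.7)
The strategy is to push the two displayed terms of $CC(Z_{\fQ_L})$ through the cohomological induction formula of Proposition~\ref{prop:CCRQ}, and then verify that both resulting conormal bundle classes in $\fB$ have moment-map image equal to the full associated variety $\fp_+$ of $\fR_{\bar\fq}(Z_{\fQ_L})$; by definition this is exactly what it means for them to belong to $LTC$.

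The first step is formal: Proposition~\ref{prop:CCRQ} applied to the given $CC(Z_{\fQ_L})=[\bar{T_{\fQ_L}^*\fB_L}]+m_{\fQ_{L,0}}[\bar{T_{\fQ_{L,0}}^*\fB_L}]+\cdots$ produces $[\bar{T_\fQ^*\fB}]+m_0[\bar{T_{\fQ_0}^*\fB}]+\cdots$ inside $CC(\fR_{\bar\fq}(Z_{\fQ_L}))$ with $m_0\neq 0$, where $\fQ,\fQ_0$ come from $\fQ_L,\fQ_{L,0}$ via the orbit correspondence of \S\ref{subsec:coho_ind}. For the moment-map computation I will use the identity $\mu(\bar{T_\fQ^*\fB})=\bar{K\cdot(\mu(\bar{T_{\fQ_L}^*\fB_L})+\fu\cap\fp)}$ recorded just before Corollary~\ref{cor:AVRQ}. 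Applied to $\fQ_{L,0}$, whose conormal has moment image all of $\fp_+\cap\fl$ (since $X$ is of the type described by the analog of \eqref{eqn:CC-down} for $L$), this yields $\mu(\bar{T_{\fQ_0}^*\fB})=\bar{K\cdot(\fp_+\cap\fl+\fu\cap\fp)}=\fp_+$, by exactly the chain of equalities just used to compute $AV(\fR_{\bar\fq}(Z_{\fQ_L}))$ together with \eqref{eqn:u}.

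The case of $\fQ$ is where the real content lies. Here $\bar{T_{\fQ_L}^*\fB_L}$ has moment image only the strictly smaller rank-$(m-1)$ $(K\cap L)$-orbit closure in $\fp_+\cap\fl$---this is precisely why the conormal of the support failed to contribute to $LTC$ inside $L$---so the identity reduces the claim to proving
\[
\bar{K\cdot\bigl(\{X\in\fp_+\cap\fl:\rank(X)\le m-1\}+\fu\cap\fp\bigr)}=\fp_+.
\]
The plan is to exhibit a single element of the left-hand set of full rank $n$; its $K$-orbit is then the open orbit $\Cal O_n$ and its closure all of $\fp_+$. Identifying $\fp_+\simeq\sym(n,\C)$ and writing symmetric matrices in $(n-m)+m$ block form $\begin{pmatrix}A & B\\ B^t & C\end{pmatrix}$, one reads off from the definition of $H_n$ in \S\ref{subsec:q} that $\fp_+\cap\fl$ is the $C$-block alone while $\fu\cap\fp$ is the combined $(A,B)$-block. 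Choose $C=\diag(I_{m-1},0)$, $A=I_{n-m}$, and $B$ the matrix unit with a single $1$ in position $(n-m,m)$; then the Schur-complement formula gives total rank $(n-m)+\rank(C-B^tA^{-1}B)=(n-m)+\rank(\diag(I_{m-1},-1))=n$, as required.

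The main, and essentially only, obstacle is this last explicit rank computation; every other step in the argument is a direct consequence of results already established in the excerpt.
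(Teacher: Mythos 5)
Your argument is correct and follows the same route as the paper's: push the two displayed terms of $CC(Z_{\fQ_L})$ through Proposition~\ref{prop:CCRQ}, then use the moment-map identity preceding Corollary~\ref{cor:AVRQ} together with $\fu\cap\fp=\fu\cap\fp_+$ to reduce the claim for $\fQ$ to exhibiting a rank-$n$ element in $\bar{\Cal O^L_{m-1}}+\fu\cap\fp_+\subset\sym(n,\C)$. The only difference is the explicit witness: the paper places the extra off-diagonal unit at position $(1,n)$ and drops the $(1,1)$ diagonal entry, whereas you keep $A=I_{n-m}$ and place the unit at $(n-m,n)$, verifying full rank by a Schur complement rather than by inspection; both choices work and the arguments are essentially identical.
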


\begin{proof} 
We check that $\mu(\bar{T_{\fQ}^*\fB})=\mu(\bar{T_{\fQ_0}^*\fB})=\fp_+$.  As noted in \S2, under the correspondence $\fQ_L\to\fQ$
$$  \mu(\bar{T_{\fQ}^*\fB})=\bar{K\cdot(\mu(\bar{T_{\fQ_L}^*\fB})+\fu\cap\fp)},
$$
and similarly for $\fQ_{L,0}\to\fQ_0$.  Thus,
$$  \mu(\bar{T_{\fQ_0}^*\fB})=\bar{K\cdot(\fp_+\cap\fl+\fu\cap\fp)},
$$
By (\ref{eqn:u}).  For $\fQ_{L}$ we have
$$   \mu(\bar{T_{\fQ}^*\fb})=\bar{K\cdot(\bar{\Cal O_{n-1}}+\fu\cap\fp_+)}.
$$
So we need to check that $\Cal O_{n-1}+\fu\cap\fp_+$ contains matrices of rank $n$ (on identifying $\fp_+$ with $\sym(n,\C)$).  In terms of root vectors such a matrix is 
$$ \sum_{i=n-m+1}^{N-1} X_{2\ge_i} +\left( \sum_{j=2}^{n-m} X_{2\ge_j} + (X_{\ge_1+\ge_N})\right)\in f_{n-1}+\fu\cap\fp_+.
$$
\vskip-30pt
\end{proof}

\begin{Proposition} Fix $m$ even and $m<n$.  There are ${m\choose \frac{m}{2}-1}$ highest weight Harish-Chandra modules $Z$ of $Sp(2n,\R)$ for which $\Cal R_{\bar{\fq}}(Z)$ has associated variety $\fp_+$ and has reducible leading term cycles.
\end{Proposition}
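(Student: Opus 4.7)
The strategy is to run the analysis that produced~(\ref{eqn:CC-down}) \emph{inside the Levi} $L \supset Sp(2m,\R)$, and then push the resulting examples up to $Sp(2n,\R)$ by cohomological induction, applying the preceding Lemma together with Proposition~\ref{prop:CCRQ} and Corollary~\ref{cor:AVRQ}.

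First I would replay \S\ref{sec:sp} with $n$ replaced by the even integer $m$, carried out inside the $Sp(2m,\R)$ factor of $L_0$. Since $\Cal{O}_{m-1}^\C$ (for $Sp(2m)$) is not special, this produces $\binom{m}{m/2-1}$ irreducible highest weight Harish-Chandra modules $Z_{\fQ_L}$ for $L_0$ (trivial on the abelian factors) whose characteristic cycles take the shape
$$
CC(Z_{\fQ_L}) = [\bar{T_{\fQ_L}^*\fB_L}] + m_{\fQ_{L,0}}[\bar{T_{\fQ_{L,0}}^*\fB_L}] + \cdots,
$$
where $\mu(\bar{T_{\fQ_L}^*\fB_L}) = \bar{\Cal{O}_{m-1,L}}$ (so the first term lies \emph{outside} the leading term cycle of $Z_{\fQ_L}$) and $\mu(\bar{T_{\fQ_{L,0}}^*\fB_L}) = \fp_+\cap\fl$. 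Each $Z_{\fQ_L}$ has infinitesimal character $\rho(\fl)$, placing it in the good range for $\fq$, so $\fR_{\bar{\fq}}(Z_{\fQ_L})$ is irreducible and nonzero.

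Next, Proposition~\ref{prop:CCRQ} transports the characteristic cycle to
$$
CC(\fR_{\bar{\fq}}(Z_{\fQ_L})) = [\bar{T_\fQ^*\fB}] + m_{\fQ_{L,0}}[\bar{T_{\fQ_0}^*\fB}] + \cdots,
$$
where $\fQ_L \mapsto \fQ$ and $\fQ_{L,0}\mapsto \fQ_0$ under the correspondence of \S\ref{subsec:coho_ind}. Corollary~\ref{cor:AVRQ} together with~(\ref{eqn:u}) gives $AV(\fR_{\bar{\fq}}(Z_{\fQ_L})) = \fp_+$, while the Lemma just proved shows that both $\mu(\bar{T_\fQ^*\fB})$ and $\mu(\bar{T_{\fQ_0}^*\fB})$ equal $\fp_+$. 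Consequently both conormal bundle closures contribute to $LTC(\fR_{\bar{\fq}}(Z_{\fQ_L}))$, making it reducible.

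To complete the count of $\binom{m}{m/2-1}$ modules I would verify that distinct $Z_{\fQ_L}$ yield distinct $\fR_{\bar{\fq}}(Z_{\fQ_L})$: distinct $\fQ_L$ map to distinct $\fQ$ under the injection $\fb_L \mapsto \fb^L$ of \S\ref{subsec:coho_ind}, so the supports $\bar{\fQ}$ differ, and hence the induced modules are pairwise non-isomorphic. The main technical points I expect to take care are (i) confirming the good range hypothesis precisely for the $Z_{\fQ_L}$'s so that Proposition~\ref{prop:CCRQ} applies as stated, and (ii) ruling out any cancellation between the two top-rank terms after induction. Both reduce to explicit checks using the definition of $H_n$ in \S\ref{subsec:q} and the fact that distinct conormal bundle closures give linearly independent cycles in the Chow group of $T_K^*\fB$.
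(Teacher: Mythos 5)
Your proposal is correct and takes essentially the same approach as the paper: replay the counting argument of \S\ref{sec:sp} inside the $\f{sp}(2m)$ factor of $\fl$ to produce $\binom{m}{m/2-1}$ modules $Z_{\fQ_L}$ whose conormal bundle of support does not contribute to $LTC(Z_{\fQ_L})$, then apply Proposition~\ref{prop:CCRQ}, Corollary~\ref{cor:AVRQ} with~(\ref{eqn:u}), and the preceding Lemma to conclude that both conormal bundle closures appearing in $CC(\fR_{\bar\fq}(Z_{\fQ_L}))$ survive into the leading term cycle.
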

\bigskip\medskip

%%%%%%%%%%%%%%%%%%%%%%%%%%%%%%%%%%%%%%%%

\subsection{}\label{subsec:refined} This section contains a refined version of the previous section, giving a larger number of examples of Harish-Chandra modules with reducible leading term cycle, however we rely on a deeper result.  It is proved in \cite[Thm.~7]{McGovern98} that each Harish-Chandra cell for $Sp(2n,\R)$ is `Lusztig', that is, it is isomorphic to a corresponding left cell representation.  The dimensions of these may be calculated from \cite[Ch.~11]{Carter85}.  This is an easy calculation for the orbits $\Cal O_{2j}^\C$ and one gets the following.

\hspace{2pt}\noindent (a) When $n$ is even,  for each $j=0,1\dots,\frac{n}{2}$, there is a cell $\fC_{2j}$ having associated variety $\Cal O_{2j}$ of size ${n\choose j}+{n\choose j-1}.$

\hspace{2pt}\noindent (b) When $n$ is odd,  for each $j=0,1\dots,\frac{n-1}{2}$, there is a cell $\fC_{2j}$ having associated variety $\Cal O_{2j}$ of size ${n\choose j}+{n\choose j-1}$ and a cell $\fC_n$ with associated variety $\bar{\Cal O_n}=\fp_+$ of size ${n\choose \frac{n-1}{2}}$.

\noindent From this, along with (\ref{eqn:geo}), we may conclude the following. 

\noindent \hspace{2pt}(a) When $n$ is even, 
\begin{align*}
&\fC_{2j}=\left\{X_\fQ: \mu(\bar{\cb})=\bar{\Cal O}_{2j}\text{ or }\bar{\Cal O}_{2j-1}\right\}, j=0,1,\dots,\frac{n}{2}.  
\intertext{\hspace{2pt}(b) When $n$ is odd, }
&\fC_{2j}=\left\{X_\fQ: \mu(\bar{\cb})=\bar{\Cal O}_{2j}\text{ or }\bar{\Cal O}_{2j-1}\right\}, j=0,1,\dots,\frac{n-1}{2}
\text{ and }  \\
&\fC_{n}=\left\{X_\fQ: \mu(\bar{\cb})=\bar{\Cal O}_{n}\right\}.
\end{align*}

\begin{Proposition} With $\fq=\fl+\fu$  as in \S\ref{subsec:q}
$$\bar{K\cdot(\bar{\Cal O}_{2j-1}+\fu\cap\fp)}=\bar{K\cdot(\bar{\Cal O}_{2j}+\fu\cap\fp)}=\fp_+,$$
when $j\geq m-[\frac{n-1}{2}]$.
\end{Proposition}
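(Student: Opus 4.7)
The plan is to exhibit a single element of $\bar{\Cal O}_{2j-1}+\fu\cap\fp$ whose rank as a matrix in $\fp_+\simeq\sym(n,\C)$ is the maximal value $n$. Since $\Cal O_n$ is the unique open $K$-orbit in $\fp_+$ and $\fp_+$ is $K$-stable, the closure of the $K$-saturation of any such element is all of $\fp_+$. This gives the first equality; the second then follows from $\bar{\Cal O}_{2j-1}\subset\bar{\Cal O}_{2j}$ squeezed between $\fp_+$ and itself.

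First I would use the block decomposition $\C^n=\C^{n-m}\oplus\C^m$ dictated by $H_n$ to identify $\fp_+\cap\fl$ with the lower-right $m\times m$ symmetric block and $\fu\cap\fp=\fu\cap\fp_+$ with everything whose lower-right $m\times m$ block is zero. A typical element of $\bar{\Cal O}_{2j-1}+\fu\cap\fp$ then has the shape
$$
X=\begin{pmatrix} A & B \\ B^T & C\end{pmatrix},\qquad A\in\sym(n-m,\C),\ B\in\C^{(n-m)\times m},\ \rank C\leq 2j-1.
$$
Writing $r=2j-1$ and $q=n-2m+r$, the hypothesis $j\geq m-[\frac{n-1}{2}]$ is equivalent, by separating parities of $n$, to $r\geq 2m-n$; that is, to $q\geq 0$ and $m-r\leq n-m$.

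Next I would take $C=\diag(I_r,0_{m-r})\in\Cal O_r$, $A=\diag(0_{m-r},I_q)\in\sym(n-m,\C)$, and $B\in\C^{(n-m)\times m}$ with $B_{k,r+k}=1$ for $k=1,\dots,m-r$ and zero otherwise; the inequality $m-r\leq n-m$ is exactly what lets these entries fit into $B$. A direct inspection should then show that $X$ is a permutation matrix: the indices $m-r+1,\dots,n-m+r$ carry $1$'s on the diagonal, and the remaining $2(m-r)$ indices are paired off by the $1$'s in $B$ and $B^T$. Hence $\det X=\pm 1$, so $X\in\Cal O_n$, completing the argument.

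The main obstacles are bookkeeping rather than conceptual: verifying that every nonzero entry of $A$ and $B$ corresponds to a root in $\fu\cap\fp_+$ under the indexing of \S\ref{subsec:q} (immediate from the block supports, since each such root has the form $2\ge_i$ or $\ge_i+\ge_j$ with $i\leq n-m$), and unwinding the parity identification for $n$ even and $n$ odd separately. The construction is the natural extension of the final paragraph of the preceding lemma's proof, where the boundary case $r=m-1$ was handled using the single extra root vector $X_{\ge_1+\ge_n}$.
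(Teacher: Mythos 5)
Your construction is correct and is, up to block notation, identical to the element the paper writes down in Case 1 of its Claim (with $k=2j-1\geq 2m-n$): the diagonal $1$'s from your $A$ and $C$ and the off-diagonal pairings from your $B$ match the paper's $\sum X_{2\ge_i}$ and $\sum X_{\ge_i+\ge_{n-m+k+i}}$ term by term. Your parity reduction of the hypothesis to $r\geq 2m-n$ is also right, and deducing the second equality from $\bar{\Cal O}_{2j-1}\subset\bar{\Cal O}_{2j}$ by squeezing is a harmless simplification of the paper's applying the Claim separately at $k=2j$.
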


\begin{proof}  Let $0\leq k\leq m$. We have
\begin{align*}
\bar{K\cdot(\bar{\Cal O}_{k}+\fu\cap\fp)}& \\
&\hspace{-62pt} =\bar{K\cdot(\bar{\Cal O}_{k}+\fu\cap\fp_+)}, \text{ since }\fu\cap\fp=\fu\cap\fp_+, \\
&\hspace{-62pt} =\bar{K\cdot(\Cal O_{k}+\fu\cap\fp_+)},  \\
&\hspace{-62pt} =\bar{K\cdot(f_{k}+\fu\cap\fp_+)}, \text{ since }K\cap L\text{ preserves }\fu\cap\fp_+.
\end{align*}
\noindent \underline{\bf Claim:} \quad
$\displaystyle{\bar{K\cdot(f_{k}+\fu\cap\fp)}=
\begin{cases} \bar{\Cal O}_{2(n-m)+k},  & k=0,1,\dots ,2m-n-1  \\
              \bar{\Cal O}_{n},  & k\geq 2m-n.
\end{cases}}$

\noindent Case 1. $k\geq 2m-n$  (i.e., $m-k\leq n-m$).  Then
$$f_k+\sum_{i=1}^{m-k}X_{\ge_i+\ge_{n-m+k+i}} + \sum_{i=m-k+1}^{n-m} X_{2\ge_i}\in f_k+\fu\cap\fp_+
$$
has rank $n$.  Note that in $\sym(n,\C)$ this is the matrix
$$\begin{bmatrix}
  &  &  &  I_{m-k}  \\
  &  I_{(n-m)-(m-k)}&  &    \\
  &  & I_k &    \\
 I_{m-k}   &  &  &    \\
\end{bmatrix}.
$$

\noindent Case 1. $k\leq 2m-n$ (i.e., $m-k\geq n-m$).  In this case $f_k+\fu\cap\fp_+$ contains
$$ f_k+\sum_{i=1}^{n-m} X_{\ge_i+\ge_{n-m+k+i}},
$$
which has rank $k+2(n-m)$. In $\sym(n,\C)$ this is the matrix
$$\begin{bmatrix}
  &  &    I_{n-m} & \\
  &  I_k&  &    \\
  I_{n-m}  &  & &    \\
    &  &  &  0_{m-k}  \\
\end{bmatrix}.
$$
Since the lower right $m\times m$ block as rank $k$, the greatest possible rank in $f_k+\fu\cap\fp_+$ is $(n-m)+k+(n-m)$.
\end{proof}

We may conclude the following.
\begin{Proposition}\label{prop:conclusion}  Let $j_0=\max\{0,m-[\frac{n-1}{2}]\}$ and $j=j_0,j_0+1,\dots,[\frac{m}{2}]$.  Then there are ${ m\choose j-1}$ highest weight Harish-Chandra modules $Z_{\fQ_L}$ for $(\fl,K\cap L)$ with associated variety $\bar{\Cal O_{2j}}$ for which $X_{\fQ}=\Cal R_{\bar{\fq}}(Z_{\fQ_L})$ has associated variety $\fp_+$ and has reducible leading term cycle.  
\end{Proposition}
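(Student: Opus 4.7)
The plan is to apply the cell-theoretic construction of Section~\ref{subsec:refined} inside the Levi factor $L=Sp(2m,\R)$ to produce highest weight $L$-modules whose characteristic cycle is already reducible, and then to cohomologically induce them through $\fq$, verifying that the induction preserves reducibility at the level of the leading term cycle. The counting and orbit geometry done in this subsection for $G$ transfers verbatim to the simple factor of $\fl$.

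First, I would run the argument of Section~\ref{subsec:refined} for $L$ in place of $G$. McGovern's theorem, applied to $Sp(2m,\R)$, gives a Harish-Chandra cell $\fC^L_{2j}$ of size ${m\choose j}+{m\choose j-1}$ with associated variety $\bar{\Cal O^L_{2j}}\subset\fp_+\cap\fl$ for each $j\leq[\frac{m}{2}]$. By \eqref{eqn:geo} applied to $L$, exactly ${m\choose j}$ members of the cell have support $\fQ_L$ with $\mu(\bar{T_{\fQ_L}^*\fB_L})=\bar{\Cal O^L_{2j}}$ and the remaining ${m\choose j-1}$ have support with $\mu(\bar{T_{\fQ_L}^*\fB_L})=\bar{\Cal O^L_{2j-1}}$. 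For each of these latter modules $Z_{\fQ_L}$, Lemma~\ref{lem:redCC} (valid here since $A_L(f)$ and $A_{K\cap L}(f)$ have the same orbits on $\Irr(\fB_L^f)$ for the pair $(Sp(2m),GL(m))$) produces some $\fQ_{L,0}$ with $\mu(\bar{T_{\fQ_{L,0}}^*\fB_L})=\bar{\Cal O^L_{2j}}$ and
\begin{equation*}
 CC(Z_{\fQ_L})=[\bar{T_{\fQ_L}^*\fB_L}]+m_0[\bar{T_{\fQ_{L,0}}^*\fB_L}]+\cdots,\qquad m_0\neq 0.
\end{equation*}

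Next, I would induce. Each such $Z_{\fQ_L}$ has infinitesimal character $\rho(\fl)$ and so lies in the good range for $\fq$; hence $X_\fQ:=\Cal R_{\bar{\fq}}(Z_{\fQ_L})$ is irreducible and Proposition~\ref{prop:CCRQ} yields
\begin{equation*}
 CC(X_\fQ)=[\bar{T_\fQ^*\fB}]+m_0[\bar{T_{\fQ_0}^*\fB}]+\cdots,
\end{equation*}
where $\fQ_L\mapsto\fQ$ and $\fQ_{L,0}\mapsto\fQ_0$ under the correspondence of Section~\ref{subsec:coho_ind}. The moment map identity used in the earlier lemma then gives
\begin{equation*}
 \mu(\bar{T_\fQ^*\fB})=\bar{K\cdot(\bar{\Cal O^L_{2j-1}}+\fu\cap\fp)},\qquad \mu(\bar{T_{\fQ_0}^*\fB})=\bar{K\cdot(\bar{\Cal O^L_{2j}}+\fu\cap\fp)}.
\end{equation*}
The preceding Proposition shows that both expressions equal $\fp_+$ whenever $j\geq j_0$. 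Consequently $AV(X_\fQ)=\fp_+$, both conormal bundles contribute to $LTC(X_\fQ)$, and the leading term cycle is reducible. Since $\fQ_L\mapsto\fQ$ is injective, the ${m\choose j-1}$ distinct modules $Z_{\fQ_L}$ yield ${m\choose j-1}$ distinct examples $X_\fQ$ of the desired type.

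The main obstacle is the geometric identity $\bar{K\cdot(\bar{\Cal O^L_{2j-1}}+\fu\cap\fp)}=\fp_+$ for the \emph{smaller} orbit $\bar{\Cal O^L_{2j-1}}$: without it, $\mu(\bar{T_\fQ^*\fB})$ would be a proper subvariety of the associated variety $\fp_+$ and the conormal bundle of the support would drop out of the leading term cycle, leaving the reducibility argument inconclusive. This is precisely what forces the restriction $j\geq j_0=\max\{0,m-[\frac{n-1}{2}]\}$, and it is settled by the explicit rank-counting in $\sym(n,\C)$ performed in the preceding Proposition; the remainder of the argument is a mechanical assembly of the cell-size formulas, Lemma~\ref{lem:redCC}, and Proposition~\ref{prop:CCRQ}.
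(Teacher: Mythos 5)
Your proof is correct and takes essentially the same route the paper intends: the paper gives no explicit proof of Proposition~\ref{prop:conclusion} but leaves it as a consequence of the cell structure (a)/(b) of \S\ref{subsec:refined} applied to $L=Sp(2m,\R)$, Lemma~\ref{lem:redCC}, the induction formula of Proposition~\ref{prop:CCRQ}, and the immediately preceding rank-counting Proposition, which is exactly what you have assembled. The only minor point is that distinctness of the resulting $X_\fQ$ is most cleanly seen from irreducibility and uniqueness of cohomological induction in the good range (equivalently, distinctness of supports via Proposition~\ref{prop:CCRQ}) rather than asserting injectivity of $\fQ_L\mapsto\fQ$ outright, but that assertion is also true and the conclusion is unaffected.
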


%%%%%%%%%%%%%%%%%%%%%%%%%%%%%%%%%%%%%%%%

\subsection{} There is a relationship between leading term cycles of irreducible Harish-Chandra modules and associated varieties of irreducible highest weight modules.  This is discussed in detail in \cite[Section 4]{Trapa07}.  We summarize the conclusions that we will use.  We may consider arbitrary Harish-Chandra modules for arbitrary reductive algebraic groups for this discussion.

A well-known geometric fact is that there is a one-to-one correspondence 
$$ \left\{ A_G(f)\text{-orbits in }\Irr(\fB^f)\right\}\leftrightarrow \left\{\text{orbital varieties in }\fn\cap(G\cdot f)\right\}.
$$

Let $X_\fQ$ be an irreducible Harish-Chandra module with support  $\bar{\fQ}$.  Suppose that $AV(X_\fQ)=\bar{\Cal O_K}$ and $\ann(X_\fQ)=\ann(L_w)$ for some $w\in W$.  We note that if, for $f\in \Cal N_\gt$, the $A_G(f)$ and $A_K(f)$ orbits in $\Irr(\fB^f)$ coincide, then there is a one-to-one correspondence
\begin{equation}\label{eqn:corr}
 \left\{\fQ\in K\backslash\fB : \mu(\bar{\cb})=\bar{\Cal O_K}\right\} \leftrightarrow \left\{\text{orbital varieties in }\fn\cap(\Cal O_K^{\,\C})\right\}.
\end{equation}
The following two statements are contained in \cite[Cor.~4.2]{Trapa07}
\begin{Proposition}\label{prop:tr} Suppose the $A_G(f)$ and $A_K(f)$ orbits  in $\Irr(\fB^f)$ coincide.  If $\fQ\leftrightarrow \nu$ under the one-to-one correspondence (\ref{eqn:corr}), then $[\bar{\cb}]$ occurs in $LTC(X_\fQ)$ if and only if $\nu$ occurs as a component of $AV(L_{w^{-1}})$.
\end{Proposition}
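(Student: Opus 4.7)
The plan is to factor both sides of the biconditional through the Spaltenstein parametrization of orbital varieties by irreducible components of the Springer fiber $\fB^f$, and then to match the resulting combinatorial data via the Vogan--Joseph cell structure on $W$.

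First I would make (\ref{eqn:corr}) geometrically explicit. By a theorem of Spaltenstein, for $f\in\Cal O_K$ the set of $A_G(f)$-orbits of top-dimensional components of $\fB^f$ is in natural bijection with the orbital varieties in $\fn\cap\Cal O_K^\C$. On the other hand, each $K$-orbit $\fQ$ with $\mu(\bar{\cb})=\bar{\Cal O_K}$ cuts out an $A_K(f)$-orbit on $\Irr(\fB^f)$ via $\bar{\cb}\cap\mu^{-1}(f)$. Under the standing assumption that $A_G(f)$- and $A_K(f)$-orbits on $\Irr(\fB^f)$ agree, composing these yields the bijection $\fQ\leftrightarrow\nu$. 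Since $\fQ$ is the support of $X_\fQ$, $n_\fQ(X_\fQ)=1$, so $[\bar{\cb}]$ lies in $LTC(X_\fQ)$ exactly when $\mu(\bar{\cb})=\bar{\Cal O_K}$, i.e., exactly when $\fQ$ has a partner $\nu$ under (\ref{eqn:corr}).

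The proposition therefore reduces to showing: the orbital variety $\nu$ attached to the support $\fQ$ is a component of $AV(L_{w^{-1}})$, and conversely every component of $AV(L_{w^{-1}})$ arises in this way from some Harish-Chandra module annihilated by $\ann(L_w)$. Here I would invoke Joseph's description of $AV(L_x)$: its components are (via Spaltenstein) the orbital varieties attached to the right cell of $x$. Applied to $x=w^{-1}$, this gives the orbital varieties attached to the left cell of $w$. On the Harish-Chandra side, the pair $(\fQ,\ann(X_\fQ)=\ann(L_w))$ selects the same left cell of $w$ via Vogan's $\tau$-invariant calculus on $K\backslash\fB$. The inversion $w\mapsto w^{-1}$ comes from the left/right cell duality: primitive ideals $\ann(L_w)$ are indexed by left cells through $w$, whereas the components of $AV(L_w)$ are indexed by right cells through $w$, which under the anti-involution $w\mapsto w^{-1}$ become left cells through $w^{-1}$.

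The main obstacle is this last matching: showing that the $A_G(f)=A_K(f)$-orbits of $\Irr(\fB^f)$ cut out by $K$-orbits $\fQ$ supporting Harish-Chandra modules with annihilator $\ann(L_w)$ coincide with the Joseph components indexed by the left cell of $w$. The cleanest route is through the Beilinson--Bernstein localization, viewing both $X_\fQ$ and $L_{w^{-1}}$ as $K$- or $B$-equivariant twisted $\mathcal{D}$-modules on $\fB$ and comparing the top-dimensional parts of their characteristic cycles under the moment map; the hypothesis $A_K(f)=A_G(f)$ is precisely what allows these two decompositions to be identified component by component.
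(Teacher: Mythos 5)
The paper does not supply a proof of Proposition \ref{prop:tr} at all; it is stated as a citation to Trapa's Corollary~4.2, so there is no proof in the paper against which to compare your sketch in detail. That said, your outline contains a misreading that collapses the statement, and the remaining steps are too coarse to constitute a proof.

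The misreading. You take the $\fQ$ in ``$\fQ\leftrightarrow\nu$'' to be the support orbit of $X_\fQ$. It is not; it is a \emph{free} variable ranging over all $K$-orbits with $\mu(\bar{T_\fQ^*\fB})=\bar{\Cal O_K}$, while $X_\fQ$ is a fixed module (the one from the preceding paragraph, with possibly unrelated support). The proposition is therefore a simultaneous description of \emph{all} the conormal bundles in $LTC(X_\fQ)$ via the components of $AV(L_{w^{-1}})$. The paper's own remark immediately following --- that, absent the equal-orbit hypothesis, a reducible $LTC$ can coexist with an irreducible $AV(L_{w^{-1}})$ --- only makes sense under this reading. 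Under your reading, both sides of the biconditional become conditioned on exactly the hypothesis $\mu(\bar{T_\fQ^*\fB})=\bar{\Cal O_K}$, and the content collapses to ``the support orbital variety is a component of $AV(L_{w^{-1}})$,'' which is only one inclusion of one direction of the statement.

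The gap. Even granting the general framework (Spaltenstein's parametrization, Joseph's description of $AV(L_{w^{-1}})$ via the right cell of $w^{-1}$, and the $w\mapsto w^{-1}$ duality between left and right cells), the crux of Trapa's argument is a comparison of the characteristic varieties of $X_\fQ$ and of a highest weight module sharing the same annihilator, carried out at the level of the $A_G(f)$-module $\Htop$. Your phrase ``the pair $(\fQ,\ann(X_\fQ)=\ann(L_w))$ selects the same left cell of $w$ via Vogan's $\tau$-invariant calculus'' names the expected answer but supplies no mechanism for why the individual conormal-bundle summands of $LTC(X_\fQ)$ --- not just the cell or the annihilator --- should line up with the individual orbital-variety components of $AV(L_{w^{-1}})$. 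That matching is exactly where the hypothesis $A_G(f)=A_K(f)$ enters, and it is the content you would need to prove rather than invoke.
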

Note that without the equal orbit hypothesis we might have a reducible leading term cycle with, say, $\bar{T_{\fQ_1}^*\fB}$ and $\bar{T_{\fQ_2}^*\fB}$ occurring, for which the corresponding $A_K(f)$-orbits in $\Irr(\fB^f)$ lie in the same $A_G(f)$-orbit.  In this case $AV(L_{w^{-1}})$ may still be irreducible.  However, without the equal orbits hypothesis we have the following.
\begin{Proposition}\label{prop:2}
Suppose $\ann(X_\fQ)=\ann(L_w)$.  If $AV(L_{w^{-1}})$ is reducible, then $LTC(X_\fQ)$ must contain at least two conormal bundle closures.
\end{Proposition}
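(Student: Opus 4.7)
The plan is to run the correspondence underlying Proposition \ref{prop:tr} in reverse, at the level of irreducible components of $LTC(X_\fQ)$ and $AV(L_{w^{-1}})$, without the equal-orbit hypothesis.

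First I would set up the relevant map.  Let $\Cal O_K$ be the open $K$-orbit in $AV(X_\fQ)$ and fix $f \in \Cal O_K$.  By (\ref{eqn:AK-orbit}), the $K$-orbits $\fQ'$ with $\mu(\bar{T_{\fQ'}^*\fB}) = \bar{\Cal O_K}$ correspond bijectively to the $A_K(f)$-orbits on $\Irr(\fB^f)$; and the orbital varieties in $\fn \cap \Cal O_K^{\,\C}$ correspond bijectively to the $A_G(f)$-orbits on $\Irr(\fB^f)$.  Since each $A_K(f)$-orbit lies in a unique $A_G(f)$-orbit, coarsening produces a natural surjection
\[
\phi : \{\fQ' : \mu(\bar{T_{\fQ'}^*\fB}) = \bar{\Cal O_K}\} \twoheadrightarrow \{\text{orbital varieties in }\fn \cap \Cal O_K^{\,\C}\},
\]
which reduces to the bijection (\ref{eqn:corr}) precisely under the equal-orbit hypothesis.

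Next I would extract from \cite[Cor.~4.2]{Trapa07} the direction valid without that hypothesis: for each irreducible component $\nu$ of $AV(L_{w^{-1}})$ there exists at least one $\fQ'$ with $\phi(\fQ') = \nu$ for which $[\bar{T_{\fQ'}^*\fB}]$ appears in $LTC(X_\fQ)$.  The underlying input is the character-theoretic identity tying the coefficients $n_{\fQ'}(X_\fQ)$ in $CC(X_\fQ)$ to the multiplicities of orbital varieties in $AV(L_{w^{-1}})$ (essentially Hotta--Kashiwara together with Joseph's identification of the components of $AV(L_{w^{-1}})$ from $\ann(L_w) = \ann(X_\fQ)$).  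Under the equal-orbit hypothesis this degenerates to Proposition \ref{prop:tr}; in general it says only that the restriction of $\phi$ to the set of $\fQ'$ with $[\bar{T_{\fQ'}^*\fB}]$ in $LTC(X_\fQ)$ surjects onto the set of irreducible components of $AV(L_{w^{-1}})$.  Granting this, Proposition \ref{prop:2} is immediate by counting: if $AV(L_{w^{-1}})$ has distinct components $\nu_1, \nu_2, \dots$, pick $\fQ'_i$ with $\phi(\fQ'_i) = \nu_i$ contributing to $LTC(X_\fQ)$; the $\fQ'_i$ are distinct since the $\nu_i$ are, so $LTC(X_\fQ)$ contains at least two conormal bundle closures.

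The main obstacle is verifying the required surjectivity direction of the extended Trapa statement without the equal-orbit hypothesis.  The cited Corollary in \cite{Trapa07} is packaged under that hypothesis, so one would have to unpack Section 4 of that paper and isolate the portion of the argument (the coherent-continuation / character identity comparing $CC$-coefficients with components of $AV(L_{w^{-1}})$) that operates purely at the level of $W$-representations and does \emph{not} invoke the coincidence of $A_G(f)$- and $A_K(f)$-orbits on $\Irr(\fB^f)$.  I expect this is routine---the hypothesis is only needed in Trapa's argument to \emph{match} individual components on the two sides, whereas we only need the weaker existence statement---but the bookkeeping is the substantive part of the proof.
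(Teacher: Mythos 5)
Your outline is correct and matches the paper's treatment: the paper gives no proof of Proposition \ref{prop:2} but attributes it (together with Proposition \ref{prop:tr}) directly to \cite[Cor.~4.2]{Trapa07}, and the surjectivity of your coarsening map $\phi$ restricted to the support of $LTC(X_\fQ)$ is exactly the piece of that corollary that survives without the equal-orbit hypothesis (the non-negativity of the multiplicities $n_{\fQ'}$ turns the character identity into the needed existence statement). Your flagging of the bookkeeping from \cite{Trapa07} as the ``substantive part'' is apt---it is precisely why the paper defers to that reference rather than reproducing the argument.
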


Prop.~\ref{prop:tr}, along with \ref{subsec:refined}, gives a family of irreducible highest weight modules $L_{w^{-1}}$ of $\f{sp}(2n)$ having reducible associated varieties.  The $w$'s can be determined explicitly as follows.  Begin with $\gs\in W_\fl$, $\fl=\f{sp}(2m)$, so that $\bar{B(\fn\cap\fn^\gs)}=\bar{\Cal O}_{2j-1}$ with $j$ as in Prop.~\ref{prop:conclusion}.  Then $\Cal R_{\bar{\fq}}(L_\gs)$ is the highest weight Harish-Chandra module $L_w$, $w=w^0\gs$, where $w^0$ is the product of the long elements of $W_\fk$ and $W_{\fk\cap\fl}$ (as follows easily from the the formula for the lowest $K$-type of a cohomologically induced representation in the good range (\cite{VoganZuckerman84})).  Thus the annihilator of $\Cal R_{\bar{\fq}}(L_\gs)$ is $ann(L_w)$ so Prop.~\ref{prop:tr} tells us that $AV(L_{w^{-1}})$ is reducible.

%%%%%%%%%%%%%%%%%%%%%%%%%%%%%%%%%%%%%%%%

\subsection{Reducible leading term cycles for $Sp(n,n)$}
Now consider $G_0'=Sp(p,q)$, $p+q=n,$ so $G'=Sp(2n)$ and $K'\simeq Sp(2p)\times Sp(2q)$.  Suppose that $\Cal O$ is a $K'$-orbit in $\Cal N_\gt'$.  Then 
\begin{enumerate}
\item[(i)] there is a Harish-Chandra cell with associated variety $\bar{\Cal O}$ (\cite{McGovern98}), and 
\item[(ii)] if $I$ is a primitive ideal with $AV(I)=\bar{\Cal O^C}$, then there is a Harish-Chandra module $X$ with $\ann(X)=I$.
\end{enumerate}

There are irreducible  representations of $Sp(2n,\R)$ with reducible leading term cycles and associated variety $\bar{\Cal O}_n=\fp_+$ (\S\ref{subsec:q},\ref{subsec:refined}).  In particular, these representations each have an annihilator $I$ with $AV(I)=\bar{\Cal O_n^\C}$.  But $\bar{\Cal O_n^\C}$ corresponds to the partition $(2,2,\dots,2)$.  It follows\footnote{The $K'$-orbits occurring in $\Cal N_\gt'$ are those for which 
(i) there are  an even number of even rows and the same number beginning with $+$ as those beginning with a $-$, and
(ii) there are an even number of odd rows and an even number beginning with $+$ and an even number beginning with a $-$.} that $\Cal O_n^C$ is $\Cal O^\C$ for some $K'$-orbit $\Cal O$ in $\Cal N_\gt'$ with $p=q=\frac{n}{2}$ (and $n$ must be even).

See \cite{Barchini14} for an  explicit example in type $C_4$.

We may now conclude that there are irreducible representations of $Sp(\frac{n}{2},\frac{n}{2})$ with reducible leading term cycles.

%%%%%%%%%%%%%%%%%%%%%%%%%%%%%%%%%%%%%%%%

\appendix

\section{Geometric facts}  Suppose $X$ is a highest weight Harish-Chandra module.  As we know, the associated variety of $X$ considered as a highest weight module, is a union of components of $\fn\cap\Cal O^\C$ (where $\bar{\Cal O^\C}$ is the associated variety of the annihilator of $X$).  On the other hand, the associated variety of a Harish-Chandra module is a union of closures of $K$-orbits in $\Cal N_\gt\cap \Cal O^\C$.  These two seemingly different objects coincide: each may be computed using the good filtration $\{X_n\}$, where $X_0$ is the $K$-type generated by the highest weight vector and $X_n=\Cal U_n(\fp^-)\cdot X_0$.  This filtration is stable under \emph{both} $K$ and $B$.  Lemma \ref{lem:ov} below shows that the the relevant orbital varieties and nilpotent $K$ orbits are in fact the same.

Similarly, the characteristic cycle of a highest weight module is a union of closures of conormal bundles of $B$-orbits on $\fB$ and the the characteristic cycle of a Harish-Chandra module is the union of closures of $K$-orbits on $\fB$.  These also coincide: the localization of $X$ is $\Eul X:=\Cal D\underset{\Cal U(\fg)}{\otimes}X$ and the  characteristic cycle is defined by any good filtration, thus independent of whether we consider $X$ a highest weight module or a Harish-Chandra module. Lemma \ref{lem:schubert} shows that the relevant Schubert varieties and $K$-orbit closures coincide.  A statement in the same spirit is in \cite[Section 1]{Collingwood92}.

Suppose $X=L_w$.  As $X$ is a Harish-Chandra module, $-w\rho$ is $\gD_c^+$-dominant.

%%%%%%%%%%%%%%%%%%%%%%%%%%%%%%%%%%%%%%%%

\subsection{Orbital varieties}   Suppose $y\in W$,  then $\nu(y):=\bar{B\cdot (\fn\cap\fn^y)}$, with $\fn^y=\Ad(y)\fn$, is an irreducible component of $\bar{\fn\cap\Cal O^\C}$.  Each such component has the form $\nu(y)$ for some $y\in W$ and is called an \emph{orbital variety}.  One easily sees that 
\begin{equation}\label{eqn:1}
\nu(y)\subset \fp_+\text{ if and only if $-y\rho$ is $\gD_c^+$-dominant.}
\end{equation}

\begin{Lemma}\label{lem:ov}
{\upshape{(i)}} If $\Cal O_K:=K\cdot f\subset\fp_+$, then $\bar{\Cal O_K}=\nu(y)$ for some $y\in W$.  In this case  $-y\rho$ is necessarily $\gD_c^+$-dominant.  \newline
{\upshape (ii)} If $\nu(y)\subset \fp_+$, then $\nu(y)=\bar{\Cal O_K}$ for some $K$-orbit in $\fp_+$.
\end{Lemma}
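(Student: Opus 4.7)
The plan is to deduce both parts from two geometric inputs for a Hermitian symmetric pair $(G,K)$. First, every $K$-orbit $\Cal O_K\subset\fp_+$ satisfies $\dim\Cal O_K=\frac{1}{2}\dim(G\cdot\Cal O_K)$ --- the Kostant-Rallis polarization property, reflecting that $\fp_+$ is an abelian Lagrangian subspace of $\fp$. Second, the intersection $\fp_+\cap\Cal O^\C$ is a single $K$-orbit whenever nonempty; this is immediate because the chain $\Cal O_0\subset\bar{\Cal O_1}\subset\cdots\subset\bar{\Cal O_r}=\fp_+$ consists of $K$-orbits lying in pairwise distinct complex nilpotent orbits $\Cal O_0^\C,\ldots,\Cal O_r^\C$.

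For (i), the hypothesis $\gD(\fp_+)\subset\gD^+$ gives $\fp_+\subset\fn$, so $\Cal O_K\subset\fn\cap\Cal O^\C$. Now $\fn\cap\Cal O^\C$ is equidimensional of dimension $\frac{1}{2}\dim\Cal O^\C$, and its irreducible components are precisely the orbital varieties $\nu(y)$. By the first input, $\bar{\Cal O_K}$ is an irreducible closed subvariety of the same dimension, so it must coincide with some component $\nu(y)$, $y\in W$. Since $\nu(y)=\bar{\Cal O_K}\subset\fp_+$, equation (\ref{eqn:1}) then forces $-y\rho$ to be $\gD_c^+$-dominant.

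For (ii), the open $B$-orbit $\nu(y)\cap\Cal O^\C$ is dense in $\nu(y)$; as $\nu(y)\subset\fp_+$, it is contained in $\fp_+\cap\Cal O^\C$, which by the second input is a single $K$-orbit $\Cal O_K$. Taking closures yields $\nu(y)\subset\bar{\Cal O_K}$, and the first input again gives matching dimensions, so $\nu(y)=\bar{\Cal O_K}$.

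The main obstacle is securing the half-dimension property cleanly in this setting: it is precisely the input that makes the associated variety of a highest weight module and of a Harish-Chandra module refer to the same geometric object, which is exactly the identification the Appendix is organized around.
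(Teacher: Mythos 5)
Your proof is correct. Part (i) is essentially the paper's argument: both proofs hinge on the half-dimension fact $\dim\Cal O_K=\frac12\dim\Cal O_K^\C$, conclude by dimension count that $\bar{\Cal O_K}$ is a component of $\fn\cap\bar{\Cal O^\C}$, i.e., an orbital variety, and then invoke (\ref{eqn:1}). The paper additionally records the $B$-stability of $\Cal O_K$ (via $B=(K\cap B)P_+$ and the abelianity of $\fp_+$), but as you observe this is not actually needed once one has the dimension equality.

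Part (ii) is where you take a genuinely different route. The paper shows directly that $\nu(y)$ is $K$-invariant through a chain of inclusions culminating in a root-theoretic check that $K\cap B^-$ stabilizes $\fn\cap\fn^y$ (using the $\gD_c^+$-dominance of $-y\rho$ from (\ref{eqn:1})); once $\nu(y)$ is known to be $K$-stable, irreducible, and contained in $\fp_+$, it must be a $K$-orbit closure because the $K$-orbits in $\fp_+$ are finite in number and linearly ordered. You instead sidestep the $K$-stability calculation entirely, replacing it with the observation that the $G$-saturation map $\Cal O_k\mapsto\Cal O_k^\C$ is injective on $K$-orbits in $\fp_+$ (another consequence of the half-dimension property applied along the chain), so $\fp_+\cap\Cal O^\C$ is a single $K$-orbit; the dense open piece $\nu(y)\cap\Cal O^\C$ then sits inside that one orbit, and equality follows from dimensions. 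One small terminological slip: $\nu(y)\cap\Cal O^\C$ is a dense $B$-invariant open subvariety, not in general a single $B$-orbit, but this plays no role in your argument. The trade-off is that the paper's approach is self-contained at the level of root combinatorics, while yours leans more heavily on the orbit structure of $\fp_+$ in Hermitian type; both are sound.
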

\begin{proof} (i) Since $BK\cdot f=KP_+\cdot f=K\cdot f$ (as $P_+$ is abelian and $f\in \fp_+$), $\Cal O_K$ os $B$-stable.  Also, $\dim(\Cal O_K)=\frac12\dim(\Cal O_K^\C)$ is the dimension of any orbital variety for $\Cal O^\C$.  It follows that $\bar{\Cal O_K}$ is $\bar{\nu(y)}$ for some $y\in W$.  By (\ref{eqn:1}), $-y\rho$ is $\gD_c^+$-dominant.

(ii) If $\nu(y)\subset \fp_+$, then 
\begin{align*}
K\nu(y)& =K\bar{B(\fn\cap\fn^y)}  \\
       & \subset \bar{KB(\fn\cap\fn^y)}  \\
       & \subset \bar{KP_+(\fn\cap\fn^y)}  \\
       & =\bar{K(\fn\cap\fn^y)},\text{ since }\fn\cap\fn^y\subset\fp_+, \\
       & \subset \bar{\bar{(K\cap B)(K\cap B^-)}(\fn\cap\fn^y)}, \\
       &  \hskip 36pt     \text{ since $(K\cap B)(K\cap B^-)$ is dense in }K,  \\
       &\subset  \bar{(K\cap B)(K\cap B^-)(\fn\cap\fn^y)}, \\
       &\subset \bar{B (\fn\cap\fn^y)}  \\
       &= \nu(y).
\end{align*}
The last inclusion needs justification.  If suffices to show that $\ga\in\gD_c^+$ and $\gb\in\gD(\fn\cap\fn^y)$ implies $-\ga+\gb\in \fn\cap\fn^y$.  Since $\gb\in\gD(\fp_+)\subset \fn$, we need only show $-\ga+\gb\in\fn^y$, i.e., $y^{-1}(-\ga+\gb)>0$.  However, $\IP{-\ga+\gb}{y\rho}=\IP{\ga}{-y\rho}+\IP{y^{-1}\gb}{\rho}>0$, since $-y\rho$ is $\gD_c^+$-dominant by (\ref{eqn:1}) and $y^{-1}\gb>0$ (since $\gb\in\fn^y$).
\end{proof}

\subsection{} Let us set $B(y)=By\cdot\fb$ and $X(y):=\bar{By\cdot\fb}$, the Schubert variety for $y\in W$.

\begin{Lemma}\label{lem:schubert}
{\upshape (i)} If $\fQ\in K\backslash \fB$ is $B$-stable, then $\bar{\fQ}=X(y)$ or some $y\in W$ with $-y\rho$ $\gD_c^+$-dominant.  \newline
{\upshape (ii)} If $X(y)$ is $K$-stable (so the closure of a $K$-orbit in $\fB$), then $-y\rho$ is $\gD_c^+$-dominant.
\end{Lemma}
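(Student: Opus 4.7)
My plan is to establish (ii) first by a tangent-space calculation at the smooth point $y\fb\in B(y)$, and then deduce (i) as an immediate corollary.

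For (ii), I will argue at the smooth open point $y\fb$ of $X(y)=\bar{B(y)}$. Under the standard identification $T_{y\fb}\fB\cong\fg/y\fb$, the tangent space to $B(y)$ at $y\fb$ becomes $(\fb+y\fb)/y\fb$, and the tangent space to the $K$-orbit $K\cdot(y\fb)$ becomes $(\fk+y\fb)/y\fb$; both are $\fh$-stable, since $\fh\subset y\fb$. A direct weight calculation, using that the nonzero $\fh$-weights of $y\fb$ are $y\gD^+$, gives the weights $\{\ga\in\gD^+:y^{-1}\ga<0\}$ for $T_{y\fb}B(y)$ and $\{\ga\in\gD_c:y^{-1}\ga<0\}$ for $T_{y\fb}(K\cdot y\fb)$. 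Since $X(y)$ is $K$-stable, the infinitesimal $K$-action is tangent to $X(y)$ at the smooth point $y\fb$, which forces the containment $(\fk+y\fb)/y\fb\subset(\fb+y\fb)/y\fb$. Comparing weights, any $\ga\in\gD_c$ with $y^{-1}\ga<0$ must lie in $\gD^+$; applied to $\ga=-\gg$ with $\gg\in\gD_c^+$, this forces $y^{-1}\gg<0$ for every $\gg\in\gD_c^+$. By regularity of $\rho$, this is equivalent to $\IP{-y\rho}{\gg}=-\IP{\rho}{y^{-1}\gg}>0$ for all $\gg\in\gD_c^+$, i.e., $-y\rho$ is $\gD_c^+$-dominant.

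For (i), a $K$-orbit $\fQ$ is irreducible, and being $B$-stable it is a finite union of $B$-orbits; hence $\fQ$ contains a unique dense $B$-orbit, which is some Schubert cell $B(y)$, so $\bar{\fQ}=X(y)$. As $\bar{\fQ}$ is the closure of a $K$-orbit it is $K$-stable, and (ii) then supplies the $\gD_c^+$-dominance of $-y\rho$.

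The main delicacy I expect is the weight bookkeeping in (ii): one must carefully use the decomposition $\gD_c=\gD_c^+\sqcup(-\gD_c^+)$, together with $\gD_c^+\subset\gD^+$ and $-\gD_c^+\subset\gD^-$, in order to extract the conclusion $y^{-1}\gD_c^+\subset\gD^-$ from the tangent inclusion. Once the sign analysis is correctly set up, its reinterpretation via $\rho$ as the $\gD_c^+$-dominance statement is routine.
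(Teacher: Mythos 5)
Your proof is correct, and it takes a genuinely different route from the paper's on the key point — the $\gD_c^+$-dominance of $-y\rho$.  The paper argues combinatorially: if $-y\rho$ were not $\gD_c^+$-dominant, there would be a compact simple root $\ga$ with $y^{-1}\ga>0$, so $\ell(\gs_\ga y)>\ell(y)$; but $K$-stability of $X(y)=\bar\fQ$ forces $\gs_\ga y\cdot\fb\in X(y)$ and hence $X(\gs_\ga y)\subset X(y)$, contradicting the Bruhat order.  (Implicit in this is the Hermitian-specific fact that a $\gD_c^+$-simple root is $\gD^+$-simple, so that $\gs_\ga$ really is a simple reflection of $W$.)  You instead run an infinitesimal argument at the smooth point $y\fb$: $K$-stability forces $T_{y\fb}(K\cdot y\fb)\subset T_{y\fb}X(y)=T_{y\fb}B(y)$, and comparing $\fh$-weights of $(\fk+y\fb)/y\fb$ and $(\fb+y\fb)/y\fb$ yields $y^{-1}\gD_c^+\subset\gD^-$, hence the dominance via regularity of $\rho$.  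Both arguments are valid; yours avoids any appeal to Bruhat order or to simplicity of compact roots in $\gD^+$, at the cost of a tangent-space computation, while the paper's is shorter once those combinatorial facts are granted.  Part (i) is handled essentially the same way in both (a $B$-stable irreducible closed set is a Schubert variety; $K$-stability of the closure then feeds into the dominance argument), though you deduce (i) as a corollary of (ii) whereas the paper proves the dominance inside (i) and notes (ii) is the same argument.
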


\begin{proof} (i) A closed $B$-stable irreducible subvariety of $\fB$ is a Schubert variety, so $\bar{\fQ}=X(y)$ for some $y\in W$.  We need to show $-y\rho$ is $\gD_c^+$-dominant.  If not, there is $\ga\in\gD_c^+$ so that $\IP{y\rho}{\ga}>0$, i.e., $y^{-1}\ga>0$.  Therefore, $\ell(y)<\ell(\gs_\ga y)$, where $\gs_\ga$ is the simple reflection for $\ga$.  Since $KBy\cdot \fb\subset X(y)$, $\gs_\ga y\cdot \fb\in X(y)$.  This implies that $X(\gs_\ga y) \subset X(y)$.  But this is impossible since $\ell(y)<\ell(\gs_\ga y)$.

(ii) Since $X(y)$ is $K$-stable, it is the closure of a single $K$-orbit in $\fB$.  The argument above shows that $-y\rho$ is $\gD_c^+$-dominant.
\end{proof}

%%%

\subsection{}  We may draw a few conclusions.
Suppose $X=L_w$ is a Harish-Chandra module, so $-w\rho$ is $\gD_c^+$-dominant.  The support of the localization is $X(w)$ (\cite[Prop.~6.4]{BrylinskiKashiwara81}).  By Lemma \ref{lem:schubert} this is the closure of a $K$-orbit $\fQ_w$ in $\fB$.  

\begin{Lemma} With the above notation $\bar{T_{B(w)}^*\fB}=\bar{T_{\fQ_w}^*\fB}$.
\end{Lemma}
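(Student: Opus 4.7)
The plan is to exhibit a Zariski-dense open subset common to $B(w)$ and $\fQ_w$, whereupon their conormal bundles — and hence the closures thereof — will automatically coincide. First, by Lemma~\ref{lem:schubert} the Schubert variety $X(w)=\bar{B(w)}$ is $K$-stable, so it equals the closure $\bar{\fQ_w}$ of the $K$-orbit on which the localization of $L_w$ is supported. Consequently both $B(w)$ and $\fQ_w$ are orbits of connected algebraic groups having the same closure $X(w)$, and each is therefore a dense open subset of $X(w)$.

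Setting $U:=B(w)\cap \fQ_w$, we obtain a non-empty Zariski-open subset of $X(w)$, hence a smooth subvariety. At every $u\in U$, both $B(w)$ and $\fQ_w$ are open neighborhoods of $u$ in $X(w)$, so $T_uB(w)=T_uU=T_u\fQ_w$; the conormal fibers at $u$ therefore agree. It follows that the three conormal bundles $T_{B(w)}^*\fB$, $T_{\fQ_w}^*\fB$, and $T_U^*\fB$, viewed as locally closed subvarieties of $T^*\fB$, restrict to a single common variety over $U$.

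Each of these conormal bundles is irreducible of dimension $\dim\fB$, being a vector bundle over a smooth irreducible base. Since $U$ is dense in both $B(w)$ and $\fQ_w$, the restriction $T_U^*\fB$ is dense in each of $T_{B(w)}^*\fB$ and $T_{\fQ_w}^*\fB$, and taking closures in $T^*\fB$ yields
\begin{equation*}
\bar{T_{B(w)}^*\fB}=\bar{T_U^*\fB}=\bar{T_{\fQ_w}^*\fB},
\end{equation*}
as desired. I do not expect any genuine obstacle: the substantive content has already been carried out in Lemma~\ref{lem:schubert}, which identifies the $B$-orbit closure with the $K$-orbit closure. Beyond that, the argument reduces to the general principle that the closure of the conormal bundle of a locally closed smooth subvariety of $\fB$ depends only on the closure of that subvariety, not on which group action produced it.
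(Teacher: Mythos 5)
Your proof is correct and takes essentially the same route as the paper's: you identify the common smooth dense open subset $U=B(w)\cap\fQ_w$ of $X(w)=\bar{\fQ_w}$ and observe that each conormal bundle closure is the closure of $T_U^*\fB$. The paper's argument is exactly this, stated more tersely.
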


\begin{proof} The varieties $X(w)$ and $\bar{\fQ_w}$ contain a smooth dense subvariety in common (namely $(Bw\cdot\fb)\cap\fQ_w$).   Both conormal bundles appearing in the lemma are the closure of the conormal bundle to the common dense subvariety.
\end{proof}

\begin{Corollary}\label{cor:CC}  The characteristic cycle of $X$ is a combination of conormal bundle closures $\bar{T_{\fQ_y}^*\fB}$ with $-y\rho $ $\gD_c^+$-dominant.
\end{Corollary}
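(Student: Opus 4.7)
The plan is to exploit the double nature of $X$ as both a highest weight module and a Harish-Chandra module to express $CC(X)$ in two ways whose irreducible summands must coincide. As noted in the opening of the appendix, the good filtration $X_n=\Cal U_n(\fp^-)\cdot X_0$ is simultaneously $B$- and $K$-stable, so the resulting characteristic cycle in $T^*\fB$ is a single well-defined object regardless of which equivariant structure is used to compute it.

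From the $(\fg,B)$-module point of view, a standard fact about highest weight $\fD$-modules gives
$$
CC(X)=\sum_{y\in W} m_y\bigl[\bar{T_{B(y)}^*\fB}\bigr],
$$
a non-negative integer combination of conormal bundle closures to Bruhat cells. From the $(\fg,K)$-module point of view, $CC(X)=\sum_\fQ n_\fQ[\bar{T_\fQ^*\fB}]$ is a combination of conormal bundle closures to $K$-orbits on $\fB$. Since both expressions decompose the same effective cycle into distinct irreducible Lagrangian components, they must agree summand by summand: for each $y$ with $m_y>0$ there exists a $K$-orbit $\fQ$ with $\bar{T_{B(y)}^*\fB}=\bar{T_\fQ^*\fB}$.

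Projecting down to $\fB$ then forces the Schubert variety $X(y)$ to coincide with $\bar{\fQ}$ and in particular to be $K$-stable. Lemma~\ref{lem:schubert}(ii) supplies that $-y\rho$ is $\gD_c^+$-dominant, and the preceding lemma identifies $\bar{T_{B(y)}^*\fB}$ with $\bar{T_{\fQ_y}^*\fB}$, which is precisely the claimed conclusion. The one delicate point is verifying that the two expressions really do refer to one and the same cycle, which rests on the simultaneous $B$- and $K$-stability of the good filtration recalled at the start of the appendix; after this is granted the argument is a formal matching of irreducible components together with the geometric lemmas already established.
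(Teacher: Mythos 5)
Your proof is correct and follows essentially the same route as the paper: the paper's proof is the terse observation that each conormal bundle in $CC(X)$ is simultaneously a conormal bundle to a $B$-orbit and to a $K$-orbit (because the characteristic cycle is intrinsic to the $\fD$-module and the good filtration), after which Lemma~\ref{lem:schubert}(ii) gives the dominance of $-y\rho$. You have simply spelled out the matching of irreducible Lagrangian components that the paper leaves implicit.
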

\begin{proof}  Each conormal bundle occurring is the conormal bundle of  both a $K$-orbit and  $B$-orbit, so Lemma \ref{lem:schubert} applies. 
\end{proof}

We also conclude the following fact.  
\begin{Corollary}\label{cor:supp}
No two highest weight Harish-Chandra modules have the same support.
\end{Corollary}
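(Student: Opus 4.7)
The plan is to combine the two pieces that have already been set up in the appendix: the Brylinski--Kashiwara identification of the support of the localization of $L_w$ as the Schubert variety $X(w)$, and the Lemma immediately preceding Cor.~\ref{cor:supp} which identifies this Schubert variety with the closure of the $K$-orbit $\fQ_w$. Once both descriptions are in hand, the corollary reduces to the statement that the assignment $w\mapsto X(w)$ is injective on $W$ — which is just the Bruhat decomposition.

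In more detail: I would take two highest weight Harish-Chandra modules $L_w$ and $L_{w'}$, so that by \S\ref{subsec:hwt} both $-w\rho$ and $-w'\rho$ are $\gD_c^+$-dominant. Viewed as Harish-Chandra modules, the support of the localization of each is the closure of a single $K$-orbit in $\fB$ (Lemma~\ref{lem:schubert}(ii)). Viewed as highest weight modules, the support of each is a Schubert variety \cite[Prop.~6.4]{BrylinskiKashiwara81}. The Lemma preceding Cor.~\ref{cor:supp} says these two descriptions give the same variety, namely $X(w)=\bar{\fQ_w}$ and $X(w')=\bar{\fQ_{w'}}$. If the two supports coincide, then $X(w)=X(w')$, so $w=w'$ by the Bruhat decomposition. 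Since the two $w$-parameters among $\{y\in W:-y\rho\text{ is }\gD_c^+\text{-dominant}\}$ are distinct coset representatives, distinct $w$'s give distinct highest weights $-w\rho-\rho$ and hence distinct irreducibles $L_w$.

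There is not really a main obstacle here: the substantive geometric input (that a $B$-stable $K$-orbit closure is a Schubert variety, together with the $\gD_c^+$-dominance of its defining $y$) has already been done in Lemma~\ref{lem:schubert}, and the identification of supports on the $\Cal D$-module side is the cited Brylinski--Kashiwara result. The corollary is then just a two-line bookkeeping argument, and the only thing to be careful about is keeping the two meanings of ``support'' (as highest weight module versus as Harish-Chandra module) straight — the Lemma immediately preceding the corollary is precisely what reconciles them.
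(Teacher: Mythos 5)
Your argument is correct and is essentially the same as the one the paper intends (the paper merely states "We also conclude the following fact" after the Lemma identifying $X(w)$ with $\bar{\fQ_w}$, leaving the bookkeeping implicit). The chain you give — support of $L_w$ equals $X(w)$ by Brylinski--Kashiwara, $X(w)=\bar{\fQ_w}$ by the preceding Lemma, and $w\mapsto X(w)$ is injective by the Bruhat decomposition — is exactly the intended reasoning.
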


%%%%%%%%%%%%%%%%%%%%%%%%%%%%%%%%%%%%%%%%%%%%%%%%%%%%%%%%%%%%%%%
%%%%%%%%%%%%%%%%%%%%%%%%%%%%%%%%%%%%%%%%%%%%%%%%%%%%%%%%%%%%%%%

\providecommand{\bysame}{\leavevmode\hbox to3em{\hrulefill}\thinspace}
\providecommand{\MR}{\relax\ifhmode\unskip\space\fi MR }
% \MRhref is called by the amsart/book/proc definition of \MR.
\providecommand{\MRhref}[2]{%
  \href{http://www.ams.org/mathscinet-getitem?mr=#1}{#2}
}
\providecommand{\href}[2]{#2}

\end{document}